\newtheorem{thm}{Theorem}[section]
\newtheorem{prop}[thm]{Proposition}
\newtheorem{lem}[thm]{Lemma}
\newtheorem{cor}[thm]{Corollary}
\theoremstyle{definition}
\newtheorem{defi}[thm]{Definition}
\theoremstyle{definition}
\theoremstyle{remark}
\newtheorem{remark}[thm]{Remark}
\numberwithin{equation}{section}
\def\G{\Gamma}
\newcommand{\R}{\mathbf{R}}  
\newcommand{\Z}{\mathbf{Z}}
\newcommand{\N}{\mathbf{N}}
\newcommand{\C}{\mathbf{C}}
\newcommand{\SL}{\mathrm{SL}_2 (\mathbf{Z})}
\newcommand{\floor}[1]{\left \lfloor #1 \right \rfloor}
\newcommand{\uh}{\mathbf{H}}
\newcommand{\sgn}{\mathrm{sign}}
\newcommand\SmallMatrix[4]{{\tiny\arraycolsep=0.3\arraycolsep\ensuremath{\begin{pmatrix}#1 & #2 \\ #3 & #4\end{pmatrix}}}}
\def\XXint#1#2#3{{\setbox0=\hbox{$#1{#2#3}{\displaystyle\displaystyle\int}$ }
\vcenter{\hbox{$#2#3$ }}\kern-.6\wd0}}
\patchcmd{\@setauthors}{\MakeUppercase\@author}{}{}{}
\begin{document}
\title{Uniform Distribution of Hardy Sums}
\author{Alessandro L\"ageler}
\maketitle

\begin{abstract}
    We employ the spectral theory of Eisenstein series to prove that the Hardy sums, integer-valued analogs of the classical Dedekind sums, are uniformly distributed in $\Z / m \Z$ for any integer $m > 1$.
\end{abstract}

\section{Introduction}

The classical Dedekind sums \begin{equation} \label{dededefi}
    s(d, c) = \frac{1}{4c} \sum_{k = 1}^{\vert c \vert - 1} \cot \frac{\pi k}{c} \cot \frac{\pi k d}{c}, \; (d, c) = 1,
\end{equation}
have been objects of interest ever since Dedekind first introduced them in his study \cite{dedekind} of the transformation behavior of the logarithm of $\eta(z) = e^\frac{\pi i z}{12} \prod_{n = 1}^\infty \left( 1 - e^{2 \pi i n z} \right)$. A lot is known about their distributional behavior: Hickerson  \cite{hickerson} showed that the set $\{ (d / c, \; s(d, c)) : (d, c) = 1, \; c > 0 \}$ is dense in $\R \times \R$, a result of Vardi \cite{vardi} asserts that their fractional parts are equidistributed on $\R / \Z$, and Myerson \cite{myerson} strenghtened Vardi's result to $(d / c, \; s(d, c))$ being equidistributed in $\R / \Z \times \R / \Z$.

A natural analog of the Dedekind sums in (\ref{dededefi}) are the so-called Hardy sums, which arise from the transformation behavior of the logarithm of $\theta$-functions. More precisely, let $\theta(z) = \sum_{n \in \Z} e^{\pi i n^2 z}$ and $\theta_4(z) = \sum_{n \in \Z} (-1)^n e^{\pi i n^2 z}$. Their logarithms transform as \begin{align}
\begin{split}\label{thetatrans}
    \log \theta (g.z) - \log \theta(z) &= \frac{1}{2} \log \left( \frac{cz + d}{i} \right) + \frac{\pi i }{4} S(d, c) \\
    &\mathrm{for} \; g = \SmallMatrix{*}{*}{c}{d} \in \SL, \; c > 0, \; c + d \; \mathrm{odd},
\end{split}
\end{align}
and \begin{align*} 
    \log \theta_4(g.z) - \log \theta_4(z) &= \frac{1}{2} \log\left(\frac{cz + d}{i} \right) - \frac{\pi i}{4} S_4(d, c) \\
    &\mathrm{for} \; g = \SmallMatrix{*}{*}{c}{d} \in \SL, \; c > 0, \; d \; \mathrm{odd},
\end{align*}
where $S(d, c)$ and $S_4(d, c)$ are integer-valued correction factors. Berndt \cite{berndt} proved that these correction factors have the following representation as finite sums:\begin{equation}
    S(d, c) = \sum_{k = 1}^{\vert c \vert - 1} (-1)^{k + 1 + \floor{\frac{dk}{c}}}, \; \; (d, c) = 1, \; c + d \; \mathrm{odd},
\end{equation}
and \begin{equation} \label{s4defi}
    S_4(d, c) = \sum_{k = 1}^{\vert c \vert - 1} (-1)^{\floor{\frac{dk}{c}}}, \; \; (d, c) = 1, \; d \; \mathrm{odd},
\end{equation}
which we refer to as "Hardy sums" (as they already appeared in early work of Hardy \cite{hardy}).

Equivalently to (\ref{thetatrans}), the Hardy sum $S(d, c)$ might be viewed in terms of the multiplier system of $\theta(z)^{8r} = e^{8r \log \theta (z)}$ for a rational number $0 < r < 1$ under the action of the group $$\G_\theta = \left\{ \SmallMatrix{a}{b}{c}{d} \in \SL : (d, c) = 1, \; c + d \; \mathrm{odd} \right\}.$$
Namely, the function $\theta(z)^{8r}$ transforms for $g = \SmallMatrix{*}{*}{c}{d}\in \G_\theta$ like $ \theta(g.z)^{8r} = \nu_r(g) (cz + d)^{4r} \theta(z)^{8r}$, where \begin{equation} \label{multsys}
    \nu_r(g) = \begin{cases} e^{2 \pi i r \left( S(d, c) - \sgn(c)\right)},  &c \neq 0, \\ e^{2 \pi i r (\sgn(d) - 1)}, &c = 0. \end{cases}
\end{equation}

Similarly as for the Dedekind sums $s(d, c)$, it is known that the sets $\{(d / c, \; S(d, c) ) : (d, c) = 1, \; c > 0, \; c + d \; \mathrm{odd} \}$ and $\{(d / c, \; S_4(d, c) ) : (d, c) = 1, \; c > 0, \; d \; \mathrm{odd} \}$ are dense in $\R \times \Z$; see \cite{meyer, laeg}. However, an analog of Vardi's result for $s(d, c)$ being equidistributed in $\R / \Z$ has been missing so far, a gap which we would like to fill with this note. 

To show that a real sequence $(a_n)_{n \geq 1}$ is equidistributed in $\R / \Z$ is equivalent to showing that for each $m \neq 0$ the Weyl sums \begin{equation} \label{weylcrit}
    \frac{1}{N} \sum_{n = 1}^N e(m a_n) \to 0, \; \mathrm{as} \; N \to + \infty,
\end{equation}
where we set $e(z) = e^{2 \pi i z}$ as usual.

As the Hardy sums are integer-valued, the question of equidistribution on $\R / \Z$ is trivial -- clearly, they are not equidistributed. Hence, instead of considering the Lebesgue-measurable set $\R / \Z$ we look at the distribution of $S(d, c)$ and $S_4(d, c)$ on the natural analog $\Z / m \Z$ for $m > 1$ with the counting measure. We get the following theorem. 

\begin{thm} \label{mainthm}
Let $m > 1$ be an integer. 
\begin{enumerate}
    \item The set $$\{ (d / c, \; S(d, c) \pmod m ): 1 \leq d < c, \; (d, c) = 1, \; c + d \; \mathrm{odd} \}$$ is uniformly distributed on $\R / \Z \times \Z / m \Z$ as $c \to + \infty$.
    \item The set $$\{ (d / c, \; S_4(d, c) \pmod m ): 1 \leq d < c, \; (d, c) = 1, \; d \; \mathrm{odd} \}$$ is uniformly distributed on $\R / \Z \times \Z / m \Z$ as $c \to + \infty$.
\end{enumerate}
\end{thm}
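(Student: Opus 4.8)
The plan is to verify the two-dimensional Weyl criterion for uniform distribution on $\R / \Z \times \Z / m \Z$. Write $N_c$ for the number of admissible residues $d$ in each part (those with $1 \leq d < c$, $(d,c) = 1$ and $c + d$ odd in (1), resp. $d$ odd in (2)). Testing against the characters $(x, y) \mapsto e(h x) e(\ell y / m)$ of $\R / \Z \times \Z / m \Z$, it suffices to prove that for every $(h, \ell) \in \Z \times \Z / m \Z$ with $(h, \ell) \neq (0, 0)$ the Weyl sums
\begin{equation*}
    W_c(h, \ell) = \sum_{d} e\!\left( h \tfrac{d}{c} \right) e\!\left( \tfrac{\ell\, S(d, c)}{m} \right)
\end{equation*}
(summed over the admissible $d$) are $o(N_c)$ as $c \to + \infty$. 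If $\ell \equiv 0 \pmod m$, then $e(\ell S(d,c) / m) = 1$ since $S(d,c) \in \Z$, and the claim collapses to the equidistribution of the Farey fractions $d / c$ on $\R / \Z$ (here $h \neq 0$), which is classical. The whole content therefore lies in the case $\ell \not\equiv 0 \pmod m$.

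Set $r = \ell / m$ and choose the representative $\ell \in \{1, \dots, m - 1\}$, so that $0 < r < 1$. The bridge to the spectral theory is the observation that the exponentiated Hardy sum is, up to a fixed constant, the multiplier system (\ref{multsys}): since $c > 0$ one has $\nu_r(g) = e(r(S(d,c) - 1))$, so that
\begin{equation*}
    e\!\left( \tfrac{\ell\, S(d,c)}{m} \right) = e(r)\, \nu_r(g), \qquad g = \SmallMatrix{*}{*}{c}{d} \in \G_\theta .
\end{equation*}
Hence $W_c(h, \ell)$ is, up to the factor $e(r)$, a generalized Kloosterman sum $\sum_{d} \nu_r(g)\, e(h d / c)$ attached to the multiplier system of the real-weight form $\theta^{8r}$ on $\G_\theta$. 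These are exactly the sums occurring in the Fourier expansion at the cusp $\infty$ of the non-holomorphic Eisenstein series $E_\infty(z, s; r)$ of weight $4r$ and multiplier $\nu_r$: its $h$-th Fourier coefficient is a Dirichlet series $Z_h(s) = \sum_{c > 0} c^{-2s} \sum_{d} \nu_r(g)\, e(h d / c)$, and the partial sums of the $W_c(h, \ell)$ are governed, via a Perron/Tauberian argument, by the poles of $Z_h(s)$ in $\Re s > 1/2$.

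The decisive input is the meromorphic continuation of $E_\infty(z, s; r)$ and the location of its poles. As $\G_\theta$ has several cusps, one continues the full vector of Eisenstein series and computes the scattering matrix; its poles in $\Re s > 1/2$ lie in $(1/2, 1]$ and correspond to residual square-integrable forms. The leading term of size $\asymp N_c$ is produced only by a pole at the edge $s = 1$, whose residue would be an $L^2$, $\Delta_{4r}$-harmonic automorphic form transforming under $\nu_r$; the natural candidate, the constant $y^0$, fails to be $\nu_r$-automorphic when $r \notin \Z$, so no such form and no edge pole exists. Any remaining poles lie strictly left of $s = 1$ and contribute only lower-order terms, so $Z_h(s)$ is regular at $s = 1$ and the Tauberian step delivers $W_c(h, \ell) = o(N_c)$.

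Combining the two cases gives part (1) via the Weyl criterion; part (2) is identical with $\theta_4^{8r}$ and its multiplier system in place of those of $\theta^{8r}$. The main obstacle is precisely the edge-pole analysis of the last paragraph: establishing that $\nu_r$ is a consistent weight-$4r$ multiplier on $\G_\theta$, carrying out the continuation and computing the constant terms at every cusp, and --- most delicately --- ruling out a pole of the scattering matrix at $s = 1$ uniformly in $r = \ell / m$. The subtle sub-case is $r = 1/2$ (possible when $m$ is even), where the edge $s = 1$ coincides with the point $s = 2r$ attached to the holomorphic weight-$2$ form $\theta^4$; one must check that, $\theta^4$ being non-cuspidal, it yields no $L^2$ residue and hence no genuine pole of $Z_h(s)$ at $s = 1$.
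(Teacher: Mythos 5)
Your route is the paper's route---the Weyl criterion on $\R / \Z \times \Z / m \Z$, the identification of $e(r S(d,c))$ with the multiplier system $\nu_r$, the Dirichlet series $Z_r(n,s)$ as Fourier coefficients of the weight-$4r$ Eisenstein series, and Perron's formula---but the step you yourself call decisive, the exclusion of a pole at $s=1$, is not actually proved by what you write. Ruling out ``the natural candidate, the constant $y^0$'' rules out one function, not the pole: by Proposition \ref{sqintres} the residue could a priori be \emph{any} nonzero square-integrable weight-$4r$ eigenfunction with eigenvalue $(s_0-2r)(1-s_0-2r)$, and the entire content of the claim is that this space is trivial. (In the paper's normalization this eigenvalue at $s_0=1$ equals $2r(2r-1) \neq 0$, so the residue is not even ``harmonic'' there; you must first pin down the eigenvalue and then show no $L^2$ eigenform carries it.) The paper proves triviality via a spectral bound: $L^2$ residues are Maass forms with non-negative eigenvalue, which forces $\Rea(s_0) \leq \max\{1-2r,2r\}$, and this is $<1$ when $0<r<\tfrac12$ (Proposition \ref{corlocpoles}). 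Worse, for $\tfrac12 < r < 1$---which occurs for every $m \geq 3$, e.g.\ $r = (m-1)/m$---this bound gives only $\Rea(s_0) \leq 2r > 1$ and does \emph{not} exclude a pole at $s = 1$ (the would-be eigenvalue $2r(2r-1)$ is then positive); your blanket assertion that the poles lie in $(1/2, 1]$ is precisely what fails for weights $4r>2$. The paper needs an extra device here, the lowering-operator identity $2iy^2 \partial_{\overline{z}} E_{4r}(z,s;\nu) = (s-2r) E_{4r-2}(z,s;\nu)$ of Corollary \ref{corlocpoles2}, to move the poles to $\Rea(s) \leq 2r-1 < 1$. Your proposal contains no substitute for this, so the case $r > \tfrac12$ is a genuine gap.

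A second missing ingredient is analytic rather than spectral: knowing that $Z_r(n,s)$ is regular at (and near) $s=1$ does not by itself bound the partial sums, because the coefficients are not non-negative and no soft Tauberian theorem applies. Perron's formula with a contour shift to $\Rea(s) = \tfrac12 + \varepsilon$ requires polynomial growth of $Z_r(n,\sigma+it)$ in $|t|$ along the vertical line and the horizontal segments; the paper obtains $Z_r(n,\sigma+it) \ll_{\sigma,\, r,\, n} |t|^{1/2}$ from the Maass--Selberg relations for the truncated Eisenstein series (Theorem \ref{maassselberg} and Lemma \ref{growthlem}), and your sketch never addresses growth at all. Three smaller points: Perron controls the averaged sums $\sum_{c \leq N} W_c(h,\ell) = o(N^2)$, not your stated criterion $W_c(h,\ell) = o(N_c)$ for each individual $c$ (the averaged statement is what equidistribution requires, so it is the one to state and prove); for $\ell \equiv 0$ and $h \neq 0$ the fractions carry the parity restriction $c+d$ odd, so the claim is not literally classical Farey equidistribution and needs the Ramanujan-sum argument of Section \ref{casem2}; and for $r = \tfrac12$ the paper deliberately avoids weight $2$ altogether, arguing elementarily from the parity $S(d,c) \equiv c+1 \pmod 2$, whereas your proposed check on $\theta^4$ once again excludes only a single candidate residue rather than the whole space.
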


As an immediate corollary to Theorem \ref{mainthm} we get that the sets $\{ S(d, c) \pmod m : 1 \leq d < c, \; (d, c) = 1, \; c + d \; \mathrm{odd} \}$ and $\{ S_4(d, c) \pmod m : 1 \leq d < c, \; (d, c) = 1, \; d \; \mathrm{odd} \}$ are both uniformly distributed on $\Z / m \Z$ as $c \to + \infty$. 

It should be noted that the distribution in $\Z / m \Z$ of the sequence of the integer-valued Rademacher symbols $$\Psi \left( \SmallMatrix{a}{b}{c}{d} \right) = \frac{a + d}{c} - 12 \; s(d, c) - 3 \sgn(c(a + d)), \; \SmallMatrix{a}{b}{c}{d} \in \SL,$$
ordered by absolute value of the trace is known to be uniform \cite{ueki}. Moreover, other distributional results have been obtained for the Rademacher symbol $\Psi$ by Mozzochi \cite{mozzochi}, based on an idea of Sarnak (see also Von Essen's generalization to other multiplier systems \cite{vonessen}, which is relevant in our case).

In the next section, we will recall the generalization of equidistribution to compact abelian groups. The first statement of Theorem \ref{mainthm} is equivalent to showing that for all $n \in \Z$ and every rational number $0 < r < 1$: \begin{equation} \label{weylsums}
    \frac{1}{\# \Phi_\theta(N)} \sum_{c = 1}^N \sum_{\substack{d(c)^*, \\ c + d \; \mathrm{odd}}} e\left(- n \frac{d}{c} + r S(d, c)\right) \to 0, \; \mathrm{as} \; N \to + \infty,
\end{equation} 
where $\sum_{d(c)^*}$ means that we sum over $d \pmod c$ so that $(d, c) = 1$ and \begin{equation*} \label{phithetadefi}
    \Phi_\theta(N) = \{ d / c : 1 \leq d < c \leq N, \; (d, c) = 1, \; c + d \; \mathrm{odd} \}.
\end{equation*}
The second part of Theorem \ref{mainthm} is equivalent to the correspondent statement for $S_4(d, c)$ in (\ref{s4defi}). As the methods are virtually the same for $S(d, c)$ and $S_4(d, c)$, we will only focus on the Hardy sum $S(d, c)$ in this note. 

The idea for the proof is straightforward: The Dirichlet series of the Weyl sums in (\ref{weylsums}) appear in the Fourier expansion of certain Eisenstein series of weight $4r$ with multiplier system $\nu_r$ given in (\ref{multsys}). The spectral theory of Eisenstein series gives the analytic continuation of these Dirichlet series and information about its poles. Analytically, this encodes information about the growth rate of the Weyl sums (\ref{weylsums}). 

In section \ref{equidist}, we will first recall the notion of equidistribution on $\Z / m \Z$, give the asymptotic growth rate of $\# \Phi_\theta(N)$, and then prove Theorem \ref{mainthm} elementarily for $m = 2$. In section \ref{eisstsec}, we recall the some facts from the spectral theory of Eisenstein series, which we will use to give the estimates of the Weyl sums for the case $m > 2$. In section \ref{unifsec} we apply Perron's formula to prove that the exponential sums in (\ref{weylsums}) grow slower than $\# \Phi_\theta(N)$, hence proving Theorem \ref{mainthm}.

\section{Weyl Sums} \label{equidist}

\subsection{Equidistribution on Compact Abelian Groups}
The theory of equidistribution generalizes to the more general setting of compact abelian groups.

\begin{defi} \label{defgroupequi}
Let $G$ be a compact abelian group. A sequence $(g_n)_{n \in \N}$ is called equidistributed in $G$ if the measure $\frac{1}{N} \sum_{n = 1}^N \delta_{g_n}$ converges weakly to the Haar measure $\mu$ in $G$ as $N \to + \infty$, i.e. if for every continuous function $f : G \to \C$ we have $$\frac{1}{N} \sum_{n = 1}^N f(g_n) \to \int_G f(g) d \mu (g), \; \mathrm{as} \; N \to + \infty.$$
\end{defi}

Equidistribution modulo $1$ is a special case of Definition \ref{defgroupequi} for the compact abelian group $\R / \Z$ (the Haar measure on $\R / \Z$ is the Lebesgue measure). In our discussion of the integer-valued Hardy sums, we will then no longer consider $\R / \Z$, but the compact abelian group $\Z / m \Z$ for an integer $m > 1$ instead. The Haar measure on $\Z / m \Z$ is the counting measure. 

\begin{thm}[Eckmann \cite{eckmann}] \label{weyleckmann}
Let $(g_n)_{n \in \N}$ be a sequence in a compact abelian group $G$. The sequence $(g_n)_{n \in \N}$ is equidistributed in $G$ if and only if $\frac{1}{N} \sum_{n = 1}^N \chi(g_n) \to 0$ as $N \to + \infty$ for all non-trivial characters $\chi$ on $G$. 
\end{thm}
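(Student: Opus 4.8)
The plan is to establish both implications from the orthogonality relations for characters, reducing the general continuous test function to the case of trigonometric polynomials. For the forward direction, I would note that every character $\chi$ on $G$ is by definition a continuous function $G \to \C$, so Definition \ref{defgroupequi} applies verbatim with $f = \chi$ and yields $\frac{1}{N}\sum_{n=1}^N \chi(g_n) \to \int_G \chi \, d\mu$. It then suffices to check that $\int_G \chi \, d\mu = 0$ whenever $\chi$ is non-trivial. This is immediate from the translation invariance of the Haar measure: picking $h \in G$ with $\chi(h) \neq 1$, the substitution $g \mapsto hg$ gives $\int_G \chi \, d\mu = \chi(h)\int_G \chi \, d\mu$, so that $(1 - \chi(h))\int_G \chi \, d\mu = 0$ and the integral vanishes.

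For the converse, assume $\frac{1}{N}\sum_{n=1}^N \chi(g_n) \to 0$ for all non-trivial $\chi$. Since the trivial character $\chi_0 \equiv 1$ satisfies $\frac{1}{N}\sum_{n=1}^N \chi_0(g_n) = 1 = \int_G \chi_0 \, d\mu$, the convergence $\frac{1}{N}\sum_{n=1}^N \chi(g_n) \to \int_G \chi \, d\mu$ holds for \emph{every} character, and by linearity it extends to every finite linear combination of characters, that is, to every trigonometric polynomial on $G$. The crux of the argument, and the step I expect to be the main obstacle, is then to approximate an arbitrary continuous $f$ by such polynomials. The characters of $G$ form a subalgebra of $C(G)$ that contains the constants, is closed under complex conjugation (as $\overline{\chi} = \chi^{-1}$ is again a character), and separates the points of $G$ by Pontryagin duality; hence the Stone--Weierstrass theorem guarantees that trigonometric polynomials are uniformly dense in $C(G)$.

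Finally, given continuous $f$ and $\ep > 0$, I would choose a trigonometric polynomial $P$ with $\|f - P\|_\infty < \ep$ and split
$$\left| \frac{1}{N}\sum_{n=1}^N f(g_n) - \int_G f \, d\mu \right| \leq \frac{1}{N}\sum_{n=1}^N |f(g_n) - P(g_n)| + \left| \frac{1}{N}\sum_{n=1}^N P(g_n) - \int_G P \, d\mu\right| + \int_G |P - f| \, d\mu.$$
Since $\mu$ is a probability measure, the first and third terms are each at most $\ep$, while the middle term tends to $0$ as $N \to +\infty$ by the case of trigonometric polynomials already treated. Taking $\limsup_{N \to +\infty}$ and then letting $\ep \to 0$ gives $\frac{1}{N}\sum_{n=1}^N f(g_n) \to \int_G f \, d\mu$, which is precisely equidistribution in the sense of Definition \ref{defgroupequi}.
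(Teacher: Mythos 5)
Your proof is correct. Note, however, that the paper does not actually prove this statement: it is quoted as a known result and attributed to Eckmann's 1943 paper, so there is no internal argument to compare yours against; what you have written is essentially the standard proof that Eckmann's theorem generalizes Weyl's criterion. Both directions are sound: the forward implication correctly combines Definition \ref{defgroupequi} (characters being continuous) with the vanishing of $\int_G \chi \, d\mu$ for non-trivial $\chi$ via translation invariance of Haar measure, and the converse correctly reduces to trigonometric polynomials and invokes Stone--Weierstrass. Two small points are worth flagging. First, the deepest ingredient is the one you cite in passing: that the characters of a compact abelian group separate points is \emph{not} elementary---it rests on the Peter--Weyl theorem (equivalently, on Pontryagin duality for compact groups), so your proof is only as self-contained as that input; this is unavoidable, since the theorem is false for a hypothetical compact abelian group with too few characters. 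Second, a wording nit: the set of characters itself is not a subalgebra of $C(G)$ (it is not closed under addition); what you mean, and what Stone--Weierstrass needs, is that the \emph{linear span} of the characters---the trigonometric polynomials---forms a subalgebra that contains constants, is self-adjoint since $\overline{\chi} = \chi^{-1}$, and separates points. With that rephrasing, the argument is complete, and your separate treatment of the trivial character in the converse (giving $\frac{1}{N}\sum_{n=1}^N \chi_0(g_n) = 1 = \int_G \chi_0 \, d\mu$, so that convergence holds for \emph{all} characters) is exactly the right bookkeeping.
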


The Weyl criterion (\ref{weylcrit}) is a special case of Theorem \ref{weyleckmann}, as the non-trivial characters on $\R / \Z$ are given by $\{ e(nx) : n \neq 0 \}$. In the case $\Z / m \Z$ for $m > 1$, the non-trivial characters are given by $n \mapsto e \left( \frac{j}{m} n \right)$ for $j = 1, ..., m - 1$; see \cite{niven, uchiyama}. 

To show equidistribution of the Hardy sums $S(d, c)$ on, say, $\Z / 2 \Z$, we must show that \begin{equation}
    \frac{1}{\# \Phi_\theta(N)} \sum_{c = 1}^N \sum_{\substack{d(c)^*, \\ c + d \; \mathrm{odd}}} e \left( \frac{1}{2} S(d, c) \right) \to 0, \; \mathrm{as} \; N \to + \infty,
\end{equation}
and, more generally, that $$\frac{1}{\# \Phi_\theta(N)} \sum_{c = 1}^N \sum_{\substack{d(c)^*, \\ c + d \; \mathrm{odd}}} e \left( r S(d, c) \right) \to 0, \; \mathrm{as} \; N \to + \infty$$
for each rational $0 < r < 1$ to show equidistribution of $S(d, c)$ on $\Z / m \Z$ for all $m > 1$. 

Finally, for the coupled equidistribution in the first part of Theorem \ref{mainthm} we need to prove (\ref{weylsums}).

\subsection{Estimating $\Phi_\theta(N)$} Naturally, to prove equidistribution we need to understand the growth rate of the function \begin{equation}
    \# \Phi_\theta(N) = \# \{ d / c : 1 \leq d < c \leq N, \; (d, c) = 1, \; c + d \; \mathrm{odd} \}.
\end{equation}

We may also write the function $\# \Phi_\theta(N)$ as a sum of special Euler-totient functions $$\varphi_\theta(c) = \sum_{\substack{d(c)^*, \\ c + d \; \mathrm{odd}}} 1 = \begin{cases} \varphi(n), &\mbox{if } c \; \mathrm{even}, \\ \frac{1}{2} \varphi(n), &\mbox{if } c \; \mathrm{odd}. \end{cases}$$
Note that $\varphi_\theta(c) = \sum_{\substack{d(c)^*, \\ c + d \; \mathrm{odd}}} e \left(- n d / c \right)$ for $n = 0$. In this notation, we have $\# \Phi_\theta(N) = \sum_{c = 1}^N \varphi_\theta(c)$. 

\begin{lem} \label{asymplem}
We have $\Phi_\theta(N) \asymp N^2$ as $N \to + \infty$. 
\end{lem}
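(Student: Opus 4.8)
The plan is to pin down the growth rate by sandwiching $\#\Phi_\theta(N) = \sum_{c=1}^N \varphi_\theta(c)$ between two multiples of the classical summatory totient function and invoking its well-known asymptotic. The starting point is the elementary estimate
$$\sum_{c = 1}^N \varphi(c) = \frac{3}{\pi^2} N^2 + O(N \log N),$$
so in particular $\sum_{c \le N} \varphi(c) \asymp N^2$.

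Next I would record the pointwise bounds on $\varphi_\theta$. Directly from its definition, $\varphi_\theta(c)$ equals $\varphi(c)$ when $c$ is even and $\tfrac{1}{2} \varphi(c)$ when $c$ is odd, so for every $c \ge 1$ we have the two-sided bound
$$\tfrac{1}{2} \varphi(c) \le \varphi_\theta(c) \le \varphi(c).$$
Summing this chain of inequalities over $1 \le c \le N$ gives
$$\tfrac{1}{2} \sum_{c = 1}^N \varphi(c) \le \#\Phi_\theta(N) \le \sum_{c = 1}^N \varphi(c).$$

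Finally, feeding the classical asymptotic into both ends yields $\tfrac{3}{2\pi^2} N^2 (1 + o(1)) \le \#\Phi_\theta(N) \le \tfrac{3}{\pi^2} N^2 (1 + o(1))$, and hence $\#\Phi_\theta(N) \asymp N^2$, as claimed. There is essentially no obstacle here: the only point requiring care is that the factor $\tfrac{1}{2}$ in the lower bound does not affect the order of magnitude, so the two-sided relation $\asymp N^2$ survives even though the implied constants on the two sides differ. If one wanted the exact leading constant rather than merely $\asymp$, one could instead split the sum according to the parity of $c$ and evaluate $\sum_{c \le N,\, c \text{ odd}} \varphi(c)$ and $\sum_{c \le N,\, c \text{ even}} \varphi(c)$ separately via their Dirichlet series or by multiplicativity, but this refinement is unnecessary for the statement at hand.
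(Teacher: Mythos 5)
Your proof is correct: the pointwise sandwich $\tfrac{1}{2} \varphi(c) \le \varphi_\theta(c) \le \varphi(c)$ follows at once from the parity formula for $\varphi_\theta$, and combined with the classical asymptotic $\sum_{c \le N} \varphi(c) = \frac{3}{\pi^2} N^2 + O(N \log N)$ it does give $\# \Phi_\theta(N) \asymp N^2$. However, it takes a genuinely different route from the paper. The paper splits $\# \Phi_\theta(N) = \Lambda_1(N) + \Lambda_2(N)$ according to the parity of $c$ and evaluates each piece by M\"obius inversion, finding that \emph{both} equal $\frac{N^2}{8} \sum_{d \; \mathrm{odd}} \frac{\mu(d)}{d^2} + o(N^2)$, i.e. both are $\sim N^2 / \pi^2$. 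That extra precision is not idle: the equality of the two leading terms is recorded in Remark \ref{case2rem} and is exactly what makes the elementary treatment of the case $m = 2$ work, where the Weyl sum for $n = 0$ equals $\Lambda_2(N) - \Lambda_1(N)$ and must be shown to be $o(N^2)$. Your sandwich argument, while shorter and perfectly adequate for the lemma as stated, produces only two-sided bounds with different constants and hence gives no cancellation between the two parity classes; if it were substituted for the paper's proof, Remark \ref{case2rem} (and with it the $m = 2$ case of Theorem \ref{mainthm}) would need a separate argument --- essentially the parity-refined evaluation you mention at the end as an optional refinement.
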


\begin{proof}
First, we notice that \begin{equation} \label{equationinproofsplitsum}
    \Phi_\theta(N) = \sum_{c = 1}^N \varphi_\theta(c) = \sum_{c = 1}^{\floor{N / 2}} \varphi(2c) + \frac{1}{2} \sum_{c = 1}^{\floor{N / 2}} \varphi(2c - 1).
\end{equation}
Let us denote the first sum on the right hand side of (\ref{equationinproofsplitsum}) by $\Lambda_1(N)$ and the second sum by $\Lambda_2(N)$. 

We get for $\Lambda_1(N)$:
\begin{align*}
    \Lambda_1(N) &= \sum_{k = 1}^{\floor{N / 2}} \varphi(2k) \\
    &= \sum_{k = 1}^{\floor{N / 2}} \sum_{dd' = 2k} d' \mu(d) \\
    &= \sum_{\substack{d = 1, \\ d \; \mathrm{even}}}^N \mu(d) \sum_{d' = 1}^{\floor{N / d}} d' + \sum_{\substack{d = 1, \\ d \; \mathrm{odd}}}^{\floor{N / 2}} \mu(d) \sum_{\substack{d' = 1, \\ d' \; \mathrm{even}}}^{\floor{N / d}} d' \\
    &= - \sum_{\substack{d = 1, \\ d \; \mathrm{odd}}}^{\floor{N / 2}} \mu(d) \sum_{d' = 1}^{\floor{N / 2d}} d' + 2 \sum_{\substack{d = 1, \\ d \; \mathrm{odd}}}^{\floor{N / 2}} \mu(d) \sum_{d' = 1}^{\floor{N / 2d}} d' \\
    &= \sum_{\substack{d = 1, \\ d \; \mathrm{odd}}}^{\floor{N / 2}} \mu(d) \sum_{d' = 1}^{\floor{N / 2d}} d' \\
    &= \sum_{\substack{d = 1, \\ d \; \mathrm{odd}}}^{\floor{N / 2}} \mu(d) \frac{1}{2} \floor{\frac{N}{2d}} \left( \floor{\frac{N}{2d}} + 1 \right)\\
    &= \frac{N^2}{8} \sum_{\substack{d = 1, \\ d \; \mathrm{odd}}}^\infty \frac{\mu(d)}{d^2} + o(N^2).
\end{align*}

Similarly, one can show that $\Lambda_2(N) = \frac{N^2}{8} \sum_{\substack{d = 1, \\ d \; \mathrm{odd}}}^\infty \frac{\mu(d)}{d^2} + o(N^2)$.
\end{proof}

\begin{remark} \label{case2rem}
Since the asymptotic leading terms of $\Lambda_1(N)$ and $\Lambda_2(N)$ in the proof of Lemma \ref{asymplem} are equal, we have $\Lambda_1(N) - \Lambda_2(N) = o(N^2)$ as $N \to + \infty$. 
\end{remark}

Thus, by Lemma \ref{asymplem}, to prove Theorem \ref{mainthm}, we need to show that for all rational numbers $0 < r < 1$ and $n \neq 0$ the Weyl sums \begin{equation} \label{weylsumslittleo}
    \sum_{c = 1}^N \sum_{\substack{d(c)^*, \\ c + d \; \mathrm{odd}}} e(- n d / c + r S(d, c)) = o(N^2)
\end{equation} 
as $N \to + \infty$.

\subsection{The case $m = 2$} \label{casem2}

We start by reviewing the proof of the equidistribution of the sequence $\{ d / c : 1 \leq d < c, \; (d, c) = 1 \}$ in $\R / \Z$. The proof amounts to finding non-trivial cancellation in the Ramanujan sums $R_c(n) = \sum_{d(c)^*} e\left( n \frac{d}{c} \right)$. This follows from \begin{equation} \label{ramanujanbound}
    R_c(n) \leq n \; \mathrm{for \; all \;} n \neq 0,
\end{equation}
which can easily be seen from the von Sterneck formula $R_c(n) = \mu \left( \frac{c}{(c, n)} \right) \frac{\varphi(c)}{\varphi\left( \frac{c}{(c, n)} \right)}$, where $\mu$ is the M\"obius-function. Hence, \begin{equation} \label{ramanujanweyl}
    \left\vert \frac{1}{N^2} \sum_{c = 1}^N \sum_{d(c)^*} e \left( n \frac{d}{c} \right) \right\vert \leq \frac{1}{N^2} Nn \to 0, \; \mathrm{as} \; N \to + \infty,
\end{equation}
by (\ref{ramanujanbound}) and thus $\{ d / c : 1 \leq d < c, \; (d, c) = 1 \}$ is equidistributed on $\R / \Z$. 

Let us now consider the sequence $\{ d / c : 1 \leq d < c, \; (d, c) = 1, \; c + d \; \mathrm{odd} \}$ instead. This sequence is also equidistributed in $\R / \Z$. Consider the Weyl sums \begin{equation} \label{case2n}
    \sum_{c = 1}^N \sum_{\substack{d(c)^*, \\ c + d \; \mathrm{odd}}} e\left( n \frac{d}{c} \right) = \sum_{\substack{c \leq N, \\ c \; \mathrm{even}}} \sum_{d(c)^*} e\left(n \frac{d}{c} \right) + \sum_{\substack{c \leq N, \\ c \; \mathrm{odd}}} \sum_{\substack{d(c)^*, \\ d \; \mathrm{even}}} e\left(n \frac{d}{c} \right)
\end{equation} 
for $n \neq 0$. The first sum on the right hand side of (\ref{case2n}) is seen to be a sum of Ramanujan sums $R_c(n)$, which is $o(N^2)$ by (\ref{ramanujanweyl}). For the second sum, it is easy to show that $$\sum_{\substack{d(c)^*, \\ d \; \mathrm{even}}} e\left( - n \frac{d}{c} \right) = \sum_{\substack{0 < d < \floor{\frac{c}{2}}, \\ (d, c) = 1}} e\left( - 2n \frac{d}{c} \right) \leq \frac{\vert R_c(2n) \vert + 1}{\left\vert 1 + e \left( -2n \frac{\floor{c / 2}}{\floor{c / 2} + 1} \right) \right\vert} \ll n$$
uniformly in $c$. This proves that the Weyl sum $\sum_{c = 1}^N \sum_{\substack{d(c)^*, \\ c + d \; \mathrm{odd}}} e\left(n \frac{d}{c} \right) = o(N^2)$ as well and thus proves equidistribution of $\{ d / c : 1 \leq d < c, \; (d, c) = 1, \; c + d \; \mathrm{odd} \}$ on $\R / \Z$.

We now give an elementary proof of the equidistribution of $(d / c, S(d, c) \pmod 2)$ on $\R / \Z \times \Z / 2 \Z$. To this end, we need to bound the Weyl sum $$\sum_{c = 1}^N \sum_{\substack{d(c)^*, \\ c + d \; \mathrm{odd}}} e \left( n d / c \right) e^{\pi i S(d, c)}.$$ As we showed in a previous paper \cite[Lem. 4.1]{laeg}, the Hardy sum $S(d, c)$ is odd when $c$ is even and $S(d, c)$ is even when $c$ is odd.\footnote{Similarly, for the value of $S_4(d, c)$ for $c > 0$, $(d, c) = 1$, and $d$ odd is an odd number if $c$ is even and is an even number if $c$ is odd \cite[Sec. 4.2]{laeg}.} Thus, we have $e^{\pi i S(d, c)} = (-1)^{c + 1}$ and so \begin{equation} \label{weylsum2}
    \sum_{c = 1}^N \sum_{\substack{d(c)^*, \\ c + d \; \mathrm{odd}}} e \left( n d / c \right) e^{\pi i S(d, c)} = \sum_{\substack{c \leq N, \\ c \; \mathrm{odd}}} \sum_{\substack{d(c)^*, \\ d \; \mathrm{even}}} e\left(n \frac{d}{c} \right) -  \sum_{\substack{c \leq N, \\ c \; \mathrm{even}}} \sum_{d(c)^*} e\left(n \frac{d}{c} \right).
\end{equation}
That (\ref{weylsum2}) is $o(N^2)$ follows immediately from the bounds we used to bound (\ref{case2n}) in the case $n \neq 0$. 

For $n = 0$, we see that $$\sum_{c = 1}^N \sum_{\substack{d(c)^*, \\ c + d \; \mathrm{odd}}} e^{\pi i S(d, c)} = \sum_{c = 1}^N (-1)^{c + 1} \varphi_\theta(c) = \Lambda_2(N) - \Lambda_1(N)$$
in the notation of the proof of Lemma \ref{asymplem}. By Remark \ref{case2rem} the Weyl sum is thus $o(N^2)$ and this proves the first part of (1) in Theorem \ref{mainthm} for $m = 2$.

Albeit it is probably possible to give an elementary proof of Theorem \ref{mainthm} as well for $m > 2$, we can use the spectral theory of automorphic forms instead. The information of the growth rate of the Weyl sums is encoded in the analytic behavior of certain Eisenstein series. The spectral method could also be used to show the case $m = 2$; but since Eisenstein series of weight $2$ are more delicate, we chose to give an elementary proof in that case.

\section{Eisenstein Series with Multiplier Systems} \label{eisstsec}

Let $0 < r < 1$ denote a rational number. Our main tool to prove the uniform distribution of Hardy sums will be the theory of Eisenstein series with a multiplier system. By a multiplier system of weight $r$ on a subgroup $\G$ of $\SL$ we mean a function $\nu : \G \to \C$ such that the cocycle $j(g, z) = cz + d$ with $g = \SmallMatrix{*}{*}{c}{d} \in \G$ satisfies $\nu(gh) j(gh, z)^r = \nu(g) \nu (h) j(g, h.z)^r j(h, z)^r$ for all $g, h \in \G$. 
 
An example of a multiplier system $\nu_r$ for $\G_\theta$ was already given in (\ref{multsys}), which is the multiplier system of the weight $4r$ modular form $\theta(z)^{8r} = e^{8r \log \theta (z)}$ under $\G_\theta$. It is not difficult to see that $\nu_{1 / 2}$ is a multiplicative homomorphism. 

For the group $\G_\theta$, let $\G_\infty < \G_\theta$ denote the subgroup of its parabolic elements, i.e. the subgroup generated by $\pm \SmallMatrix{1}{2}{0}{1}$. The multiplier system $\nu_r$ is singular with respect to the cusp $i \infty$, i.e. $\nu_r \left( \SmallMatrix{1}{2}{0}{1} \right) = 1$ (see also \cite[Prop. 3.4]{laeg}). We may hence view $\nu_r$ as a function on the cosets $\G_\infty \setminus \G$.

For a multiplier system $\nu$, consider the following Dirichlet series: \begin{equation}
    Z_\nu(n, s) = \sum_{c > 0} \frac{1}{c^{2s}} \sum_{\substack{d(c)^*, \\ c + d \; \mathrm{odd}}}  e\left( - n \frac{d}{c}\right) \nu \left( \SmallMatrix{*}{*}{c}{d} \right).
\end{equation}
For our bounds on the Weyl sums (\ref{weylsums}), we will be interested in the special case $$Z_r(n, s) = Z_{\nu_r}(n, s) = e^{- 2 \pi i r} \sum_{c = 1}^\infty \frac{1}{c^{2s}} \sum_{\substack{d(c)^*, \\ c + d \; \mathrm{odd}}}  e\left( - n \frac{d}{c} + r S(d, c) \right).$$

For a multiplier system $\nu$ of weight $4r$, which is singular with respect to $i \infty$, define \begin{equation} \label{eisensteindefi} E_{4r} (z, s; \nu) = \sum_{g \in \G_\infty \setminus \G_\theta} \overline{\nu(g)} j(g, z)^{-4r} \mathrm{Im}(g.z)^{s - 2r}, \; \mathrm{Re}(s) > 1,
\end{equation}
which is the non-holomorphic Eisenstein series of weight $4r$ and multiplier system $\nu$. The series is absolutely convergent for $\mathrm{Re}(s) > 1$ and uniformly convergent on compact subsets of $\uh$.


\begin{prop} \label{eisfourier}
The Fourier expansion of the non-holomorphic Eisenstein series is given by $$E_{4r}(z, s; \nu) = y^{s - 2r} + \varphi_{4r, \nu}(s) y^{1 - s - 2r} + \sum_{n \neq 0}  y^{-2r} \varphi_{4r, \nu}(n, s) W_{\sgn(n) 2r, s - 1 / 2}(2 \pi \vert n \vert y) e^{\pi i n x},$$
where \begin{align*}
    \varphi_{4r, \nu}(s) &= \pi \frac{2^{2 - 2s} \Gamma (2s - 1) e^{2 \pi i r}}{\Gamma (s - 2r) \Gamma(s + 2r)} Z_\nu(0, s), \\
    \varphi_{4r, \nu}(n, s) &= \frac{\pi^s \vert n \vert^{s - 1} e^{2 \pi i r}}{\Gamma (s + \sgn(n) 2r)} Z_\nu(n, s),
\end{align*}
and $W_{\kappa, \mu}(z)$ denotes the Whittaker $W$-function.
\end{prop}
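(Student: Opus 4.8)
The goal is to compute the Fourier expansion of the Eisenstein series
\begin{equation*}
E_{4r}(z, s; \nu) = \sum_{g \in \G_\infty \setminus \G_\theta} \overline{\nu(g)} j(g, z)^{-4r} \mathrm{Im}(g.z)^{s - 2r}.
\end{equation*}
The plan is to follow the standard unfolding-and-Poisson-summation computation for weight-$k$ Eisenstein series with a multiplier system, adapted to the group $\G_\theta$ whose cusp width at $i\infty$ is $2$ (reflecting the translation $\SmallMatrix{1}{2}{0}{1}$). First I would split the coset sum $\G_\infty \setminus \G_\theta$ according to the lower-left entry $c$. The identity coset contributes the term $y^{s - 2r}$, since $j(\mathrm{id}, z) = 1$, $\mathrm{Im}(z) = y$, and $\overline{\nu(\mathrm{id})} = 1$. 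For $c \neq 0$ I would parametrize the remaining cosets by pairs $(c, d)$ with $c > 0$, $(d, c) = 1$, $c + d$ odd, and $d$ running modulo $2c$ (the doubled modulus accounts for the cusp width $2$), using that $\nu_r$ is $\G_\infty$-invariant so that $\overline{\nu(g)}$ descends to a function of the coset.

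Next I would carry out the Fourier expansion in $x$ with period $2$, so that the frequencies are $e^{\pi i n x}$ as in the statement. For each fixed $c$, I would use the standard substitution $g.z = \frac{a}{c} - \frac{1}{c^2 z'}$ where $z' = z + d/c$, writing $j(g,z)^{-4r} = (cz + d)^{-4r}$ and $\mathrm{Im}(g.z) = y / |cz + d|^2$. Summing over $d$ in a fixed residue class modulo $c$ and then over the residue classes via the Poisson summation formula transforms the inner sum into a Gauss-type sum times an integral. The arithmetic part collects into the Dirichlet series $Z_\nu(n, s)$, picking up the factor $e\left( - n \frac{d}{c}\right) \overline{\nu(\SmallMatrix{*}{*}{c}{d})}$; here I must track the conjugation carefully, since $\overline{\nu_r(g)} = e^{-2\pi i r (S(d,c) - \mathrm{sign}(c))}$ contributes the $e^{2 \pi i r}$ prefactors and the sign convention on $S(d,c)$ matches the definition of $Z_r(n,s)$.

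The analytic part is the archimedean integral
\begin{equation*}
\int_{-\infty}^{\infty} (x + iy)^{-4r} |x + iy|^{-2(s - 2r)} e^{-\pi i n x}\, dx,
\end{equation*}
which is the standard integral representation producing, for $n = 0$, the completed gamma-factor and the $y^{1 - s - 2r}$ term with coefficient $\varphi_{4r, \nu}(s)$, and for $n \neq 0$ the Whittaker function $W_{\mathrm{sign}(n) 2r,\, s - 1/2}(2\pi |n| y)$ with coefficient $\varphi_{4r, \nu}(n, s)$. I would invoke the classical evaluation of this integral (see e.g.\ the Whittaker-function identities in standard references) rather than recomputing it; the gamma-factors $\Gamma(s - 2r)$, $\Gamma(s + 2r)$ in $\varphi_{4r,\nu}(s)$ and $\Gamma(s + \mathrm{sign}(n) 2r)$ in $\varphi_{4r,\nu}(n,s)$ arise directly from this evaluation together with the duplication formula for $\Gamma(2s - 1)$.

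The main obstacle I anticipate is bookkeeping rather than conceptual: getting every normalization constant, every power of $2$ from the cusp width, and the sign of the Whittaker index exactly right, and in particular verifying that the complex conjugate multiplier $\overline{\nu_r}$ combines with the Fourier frequency to reproduce precisely the series $Z_\nu(n,s)$ as defined. The convergence for $\mathrm{Re}(s) > 1$ is already granted by the hypotheses, so it suffices to justify interchanging summation and integration in that range, after which the expansion is an identity of absolutely convergent series.
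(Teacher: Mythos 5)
Your proposal is sound, but it takes a genuinely different route from the paper: the paper's entire proof of Proposition \ref{eisfourier} is a citation to Fay's Theorem 3.4, together with the dictionary that Fay's Eisenstein series is $y^{-4r}E_{-4r}(z,s;\nu)$ in the present normalization, whereas you rederive the expansion by the standard unfolding--Poisson computation. Your outline is essentially the computation that underlies Fay's theorem, and the constants do come out as stated: writing $j(g,z)^{-4r}\mathrm{Im}(g.z)^{s-2r} = y^{s-2r}(cz+d)^{-(s+2r)}(c\bar{z}+d)^{-(s-2r)}$, the $n=0$ archimedean integral $\int_{\R}(x+iy)^{-(s+2r)}(x-iy)^{-(s-2r)}\,dx$ equals $\pi\, 2^{2-2s}\Gamma(2s-1)\Gamma(s+2r)^{-1}\Gamma(s-2r)^{-1}y^{1-2s}$ up to a phase, which is exactly the gamma factor in $\varphi_{4r,\nu}(s)$, and the twisted integral produces $W_{\sgn(n)2r,\,s-1/2}(2\pi\vert n\vert y)$ with the single factor $\Gamma(s+\sgn(n)2r)$, matching $\varphi_{4r,\nu}(n,s)$; your argument $2\pi\vert n\vert y$ is indeed the correct width-$2$ normalization, since the frequencies are $n/2$. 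What your route buys is that every convention stays in-house (cusp width, the $e^{2\pi i r}$ prefactor, the exact shape of $Z_\nu$), which is valuable precisely because translating Fay's weight-$(-4r)$, $y^{-4r}$-twisted normalization into the paper's notation is where errors would hide; what the citation buys is brevity and protection against slips in an error-prone computation. I would, however, upgrade one of your ``bookkeeping'' items to a genuine subtlety: $S(d,c)$, and hence $\nu_r$, is well-defined only for $d$ modulo $2c$, not modulo $c$, so the sum over $d \pmod{2c}$ that unfolding at a width-$2$ cusp naturally produces must be reconciled carefully with the sum $\sum_{d(c)^*,\,c+d\;\mathrm{odd}}$ defining $Z_\nu(n,s)$; moreover, unfolding yields sums of the shape $\sum_d \overline{\nu_r(g)}\,e(\pm nd/c)$ with $\overline{\nu_r(g)} = e^{2\pi i r}e^{-2\pi i r S(d,c)}$ for $c>0$, and this matches the stated coefficient $e^{2\pi i r}Z_{\nu_r}(n,s)$, whose summand is $e(-nd/c + rS(d,c))$, only after reindexing $d \mapsto -d$ and using the oddness $S(-d,c) = -S(d,c)$. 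These verifications are exactly where your plan must be carried out in full, but none of them threatens the approach.
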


\begin{proof}
See, for instance, Fay's paper \cite[Thm. 3.4]{fay}; note that the Eisenstein series considered in Fay's paper is $y^{- 4r} E_{- 4r}(z, s; \nu)$ in our notation.
\end{proof}

Proposition \ref{eisfourier} is the special case of the Fourier expansion around the cusp $\mathfrak{a} = i \infty$. More generally, for any cusp $\mathfrak{a}$ of $\G_\theta$, let $\sigma_\mathfrak{a} \in \mathrm{SL}_2(\R)$ be its scaling matrix such that $\sigma_\mathfrak{a}.i \infty = \mathfrak{a}$. The Eisenstein series $E_{4r}(z, s; \nu)$ has a Fourier expansion around $\mathfrak{a}$ given by \begin{align}
    \begin{split} \label{eisfouriercusp}
    j(\sigma_\mathfrak{a}, z)^{4r} E_{4r}(\sigma_\mathfrak{a}^{-1}.z, s; \nu) &= \delta_{\mathfrak{a}, \infty} \mathrm{Im}(\sigma_{\mathfrak{a}}^{-1}.z)^{s - 2r} + \varphi_{4r, \mathfrak{a}, \nu} (s) \mathrm{Im}(\sigma_{\mathfrak{a}}^{-1}.z)^{1 - s - 2r} \\
    &+ \sum_{n \neq 0} y^{-2r} \varphi_{4r, \mathfrak{a}, \nu}(n, s) W_{\sgn(n) 2r, s - \frac{1}{2}}(4 \pi \vert n \vert y) e^{\pi i n x}
    \end{split}
\end{align}
for certain functions $\varphi_{4r, \mathfrak{a}, \nu}(s)$ and $\varphi_{4r, \mathfrak{a}, \nu}(n, s)$. 

It is well-known that the Eisenstein series $E_{4r}(z, s; \nu)$ has an analytic continuation in $s$ to the whole complex plane. From the Fourier expansion in Proposition \ref{eisfourier}, the poles of $E_{4r}(z, s; \nu)$ correspond to poles of the analytically continued Dirichlet series $Z_\nu(n, s)$ or -- e.g. in the case $s = \frac{1}{2}$ -- to poles of the $\G$-factors. For a pole $s_0$ of $E_{4r}(z, s; \nu)$, we shall be interested in giving an upper bound for $\mathrm{Re}(s_0)$.

\begin{prop} \label{sqintres}
Suppose the Eisenstein series $E_{4r}(z, s; \nu)$ has a pole at $s_0 \in \C$ with $\mathrm{Re}(s_0) > \frac{1}{2}$. Then its residue $\mathrm{Res}_{s = s_0} E_{4r}(z, s; \nu)$ is a non-zero square-integrable eigenfunction of $\Delta_{4r}$ with eigenvalue $(s_0 - 2r) (1 - s_0 - 2r)$. 
\end{prop}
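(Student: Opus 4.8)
The plan is to establish separately the three asserted properties of the residue $u(z) := \Res_{s = s_0} E_{4r}(z, s; \nu)$: that it is an eigenfunction of $\Delta_{4r}$ with eigenvalue $(s_0 - 2r)(1 - s_0 - 2r)$, that it is square-integrable, and that it is non-zero. The last point is immediate, for if $s_0$ is genuinely a pole of $E_{4r}(z, s; \nu)$ then $s \mapsto E_{4r}(z_0, s; \nu)$ has a pole at $s_0$ for some $z_0 \in \uh$, whence $u(z_0) \neq 0$ and $u$ is not identically zero.

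For the eigenfunction property, I would begin from the fact that in the region of absolute convergence $\Rea(s) > 1$ the series (\ref{eisensteindefi}) satisfies $\Delta_{4r} E_{4r}(z, s; \nu) = (s - 2r)(1 - s - 2r) E_{4r}(z, s; \nu)$, which is verified termwise. By the analytic continuation of $E_{4r}(z, s; \nu)$ this identity persists for all $s$, and elliptic regularity lets me interchange the fixed differential operator $\Delta_{4r}$ with the operation of taking a residue in $s$. Writing $\lambda(s) = (s - 2r)(1 - s - 2r)$ and comparing the most singular Laurent coefficients on the two sides of $\Delta_{4r} E_{4r} = \lambda(s) E_{4r}$ at $s_0$, the leading coefficient of the principal part is an eigenfunction of $\Delta_{4r}$ with eigenvalue $\lambda(s_0)$; since the poles of $E_{4r}(z, s; \nu)$ in the half-plane $\Rea(s) > \frac12$ are simple — which I would either invoke from the general theory or extract from the Maass--Selberg relations below — this coefficient is exactly $u$, giving $\Delta_{4r} u = (s_0 - 2r)(1 - s_0 - 2r) u$.

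The heart of the matter is square-integrability against the weight-$4r$ Petersson measure $y^{4r}\, \frac{\dif x \, \dif y}{y^2}$ on $\G_\theta \setminus \uh$. As the fundamental domain splits into a compact core, on which the smooth function $u$ is bounded, and finitely many cusp neighborhoods, everything reduces to the growth of $u$ at each cusp $\mathfrak{a}$, which I read off from the expansion (\ref{eisfouriercusp}). The term $\delta_{\mathfrak{a}, \infty} \Imm(\sigma_{\mathfrak a}^{-1}.z)^{s - 2r}$ is entire in $s$ and hence contributes nothing to the residue, while $\varphi_{4r, \mathfrak{a}, \nu}(s)\, \Imm(\sigma_{\mathfrak a}^{-1}.z)^{1 - s - 2r}$ contributes a multiple of $y^{1 - s_0 - 2r}$ to the constant term of $u$ (in the coordinate $y = \Imm(\sigma_{\mathfrak a}^{-1}.z)$), the coefficient being $\Res_{s = s_0} \varphi_{4r, \mathfrak{a}, \nu}(s)$. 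The non-constant modes carry Whittaker functions $W_{\sgn(n) 2r, s_0 - 1/2}(4 \pi \vert n \vert y)$, which decay exponentially as $y \to \infty$ and are harmless. The contribution of the constant term to the norm over $y > Y$ is then, up to the width of the cusp, $\int_Y^\infty \vert y^{1 - s_0 - 2r}\vert^2 y^{4r - 2} \, \dif y = \int_Y^\infty y^{-2 \Rea(s_0)} \, \dif y$, which converges precisely because $\Rea(s_0) > \frac12$; hence $u \in L^2$.

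This computation is where I expect the real difficulty to lie, and it also explains the hypothesis $\Rea(s_0) > \frac12$: had the residue retained a term proportional to $y^{s_0 - 2r}$, as the full Eisenstein series does, the corresponding integral $\int_Y^\infty y^{2 \Rea(s_0) - 2} \, \dif y$ would diverge in this range, so it is essential that the $\Imm(\sigma_{\mathfrak a}^{-1}.z)^{s - 2r}$ term is holomorphic in $s$ and drops out. The technical care lies in treating all cusps of $\G_\theta$ uniformly, in justifying the term-by-term discarding of the exponentially small modes, and in pinning down the order of the pole at $s_0$; the cleanest route to the latter, which I would pursue, is through the truncated Eisenstein series and the Maass--Selberg relations, which simultaneously yield simplicity of the pole and square-integrability of the residue.
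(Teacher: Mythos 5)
Your proposal is correct and follows essentially the same route as the paper: the paper's proof simply says to read off the Fourier expansion of the residue around each cusp in (\ref{eisfouriercusp}), which is exactly your central computation — the $\Imm(\sigma_{\mathfrak{a}}^{-1}.z)^{s-2r}$ term drops out of the residue, the $y^{1-s_0-2r}$ constant term is square-integrable precisely when $\mathrm{Re}(s_0) > \frac{1}{2}$, and the Whittaker modes decay exponentially. Your additional care about simplicity of the pole, the eigenfunction property, and non-vanishing fills in details the paper leaves implicit.
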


\begin{proof}
This can be seen from looking at the Fourier expansion of the residues around each cusp $\mathfrak{a}$ of $\G_\theta$ in (\ref{eisfouriercusp}).
\end{proof}

\begin{remark}
For $r = \frac{1}{8}$ the Eisenstein series $E_{1 / 2}(z, s; \nu_r)$ has a pole at $s = \frac{3}{4}$ and its residue is equal to $\theta(z)$. 
\end{remark}

\begin{prop} \label{corlocpoles}
The Eisenstein series $E_{4r}(z, s; \nu)$ is holomorphic on the half-plane $\mathrm{Re}(s) > \max \{1 - 2r, 2r\}$.
\end{prop}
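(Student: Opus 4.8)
The plan is to rule out poles in the stated region by feeding the spectral dichotomy of Proposition~\ref{sqintres} into two structural facts about the weight-$4r$ Laplacian $\Delta_{4r}$: its self-adjointness and its non-negativity on the relevant $L^2$-space. I argue by contradiction. Suppose $E_{4r}(z,s;\nu)$ had a pole at some $s_0$ with $\mathrm{Re}(s_0) > \max\{1-2r,\,2r\}$. Since $\max\{1-2r,\,2r\}\geq \tfrac12$ always (if both quantities were $<\tfrac12$ one would get the contradiction $2r<\tfrac12$ and $2r>\tfrac12$), we have $\mathrm{Re}(s_0)>\tfrac12$, so Proposition~\ref{sqintres} applies and yields a non-zero square-integrable eigenfunction $f=\mathrm{Res}_{s=s_0}E_{4r}(z,s;\nu)$ of $\Delta_{4r}$ with eigenvalue $\lambda_0=(s_0-2r)(1-s_0-2r)$.

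The first step is to force $s_0$ to be real. Because $\Delta_{4r}$ is self-adjoint on $L^2(\G_\theta\backslash\uh)$ (with the weight-$4r$, multiplier-$\nu$ inner product), every eigenvalue is real, so $\mathrm{Im}(\lambda_0)=0$. Writing $s_0=\sigma+it$ and expanding the product $\bigl((\sigma-2r)+it\bigr)\bigl((1-\sigma-2r)-it\bigr)$, one reads off $\mathrm{Im}(\lambda_0)=t\,(1-2\sigma)$. Since $\sigma=\mathrm{Re}(s_0)>\tfrac12$ gives $1-2\sigma\neq 0$, this forces $t=0$, i.e. $s_0\in\R$.

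The second step uses non-negativity. Viewed as a function of the real variable $s_0$, the eigenvalue $\lambda_0=(s_0-2r)(1-s_0-2r)$ is a downward-opening parabola with roots exactly at $s_0=2r$ and $s_0=1-2r$; hence $\lambda_0\geq 0$ holds precisely on the interval $[\min\{2r,1-2r\},\,\max\{2r,1-2r\}]$, whereas $s_0>\max\{2r,1-2r\}$ makes the first factor positive and the second negative, so $\lambda_0<0$. Consequently, once we know $\Delta_{4r}\geq 0$, the eigenvalue $\lambda_0\geq 0$ is incompatible with $s_0>\max\{1-2r,2r\}$, and no such pole can exist. This proves holomorphy on $\mathrm{Re}(s)>\max\{1-2r,2r\}$.

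The main obstacle is therefore the non-negativity of $\Delta_{4r}$; everything else is bookkeeping. This is exactly the classical Roelcke--Selberg lower bound for the spectrum of the weight-$k$ hyperbolic Laplacian, which in the standard normalization reads $\lambda\geq \tfrac{|k|}{2}\bigl(1-\tfrac{|k|}{2}\bigr)$, the bottom being attained by holomorphic forms of weight $k$. For $k=4r$ this bound equals $2r(1-2r)$, and comparing the eigenvalue parametrization $(s-2r)(1-s-2r)$ used here with the unnormalized one $s(1-s)$ shows that the two differ by the additive constant $s(1-s)-(s-2r)(1-s-2r)=2r(1-2r)$; thus the normalization implicit in Proposition~\ref{sqintres} shifts the classical bottom to $0$, giving $\Delta_{4r}\geq 0$. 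The extremal case is precisely the residue $\theta(z)$ at $s_0=1-2r$, which sits at the bottom $\lambda_0=0$, and this is why the boundary $\max\{1-2r,2r\}$ cannot be lowered. If one prefers a self-contained route, $\Delta_{4r}\geq 0$ can instead be obtained directly from a Rayleigh-quotient computation, expressing $\langle \Delta_{4r}f,f\rangle$ via the Maass lowering operator as a manifestly non-negative Dirichlet-type integral, which is the computation underlying the Roelcke--Selberg bound anyway.
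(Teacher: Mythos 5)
Your proof is correct and follows essentially the same route as the paper's: both turn a hypothetical pole with $\mathrm{Re}(s_0) > \max\{1-2r,\,2r\}$ into a non-zero square-integrable eigenfunction via Proposition \ref{sqintres}, then derive a contradiction from the non-negativity of the spectrum of $\Delta_{4r}$ combined with the sign analysis of the parabola $(s-2r)(1-s-2r)$, whose roots are $2r$ and $1-2r$. The only difference is that you make explicit what the paper asserts in a single line --- reality of $s_0$ via self-adjointness (computing $\mathrm{Im}(\lambda_0)=t(1-2\sigma)$), and non-negativity via the Roelcke--Selberg bound $\lambda \geq \frac{|k|}{2}\bigl(1-\frac{|k|}{2}\bigr)$ transported to the shifted eigenvalue normalization --- which is a correct elaboration rather than a different argument.
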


\begin{proof}
Suppose $E_{4r}(z, s; \nu)$ has a pole at $s_0 \in \C$. By Proposition \ref{sqintres}, the residue of the Eisenstein series at $s_0$ is a non-zero square-integrable function $\varphi(z)$ with eigenvalue $(s_0 - 2r) (1 - s_0 - 2r)$ under $\Delta_{4r}$, i.e. a Maass form. Each Maass form has non-negative eigenvalue. An elementary analysis of the polynomial $(s - 2r) (1 - s - 2r) = - s^2 + s + 4r^2 \geq 0$ gives that for $\mathrm{Re}(s) > \frac{1}{2}$ the point $s$ needs to be real and that its value will be negative if $\mathrm{Re}(s) > \max \{1 - 2r, 2r \}$. Hence, we must have $\mathrm{Re}(s_0) \leq \max\{1 - 2r, 2r\}$. 
\end{proof}

Generally, the Eisenstein series $E_{4r}(z, s; \nu)$ has a pole at $s = \max \{1 - 2r, 2r \}$ for $0 < r < \frac{1}{2}$. The Eisenstein series $E_2(z, s; \nu)$ does not have a pole at $s = 1$ and is a harmonic function at that point. 

Moreover, Proposition \ref{corlocpoles} does not give any new information about the poles of the Eisenstein series $E_{4r}(z, s; \nu)$ in the strip $0 < \mathrm{Re}(s) < 1$ for $\frac{1}{2} < r < 1$. However, using lowering operators, we can easily get information about the poles of $E_{4r}(z, s; \nu)$ in the strip through Proposition \ref{corlocpoles}.

\begin{cor} \label{corlocpoles2}
For $\frac{1}{2} < r < 1$, the Eisenstein series $E_{4r}(z, s; \nu)$ is holomorphic for $\mathrm{Re}(s) > 2r - 1$.
\end{cor}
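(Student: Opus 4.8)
The plan is to trade the weight $4r$, where Proposition~\ref{corlocpoles} is useless inside the strip (it only yields holomorphy for $\mathrm{Re}(s)>2r>1$), for the weight $4r-2=4(r-\tfrac12)$, where it is effective: since $r-\tfrac12\in(0,\tfrac12)$, Proposition~\ref{corlocpoles} applied to $E_{4r-2}$ (i.e.\ with $2r$ replaced by $2r-1$) confines the poles of $E_{4r-2}(z,s;\nu)$ to $\mathrm{Re}(s)\le\max\{1-(2r-1),\,2r-1\}$, a bound lying inside the critical strip. The bridge between the two weights is the Maass lowering operator. Note first that $\nu$ is simultaneously a multiplier system of weight $4r$ and of weight $4r-2$: the two cocycle relations differ only by the factor $j(gh,z)^2=j(g,h.z)^2j(h,z)^2$, which holds for every pair $g,h\in\G_\theta$, so the same $\nu$ may be used in both weights.

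First I would record the intertwining identity. Writing $\Lambda_{4r}$ for the weight-$4r$ Maass lowering operator, which carries weight-$4r$ forms for $(\G_\theta,\nu)$ to weight-$(4r-2)$ forms for $(\G_\theta,\nu)$ and commutes with the slash action, I apply it term by term to the series~\eqref{eisensteindefi}. Because $\Lambda_{4r}$ acts on the seed $\mathrm{Im}(g.z)^{s-2r}$ by multiplication by a scalar depending only on $s$, this produces an identity of the form
\[
\Lambda_{4r}\,E_{4r}(z,s;\nu)=c(s)\,E_{4r-2}(z,s;\nu),
\]
with the \emph{same} spectral parameter $s$ on both sides and $c(s)$ an explicit elementary factor of the type appearing in Proposition~\ref{eisfourier}; determining $c(s)$ exactly is a routine Gamma-factor computation, and one checks that its (at most one) zero falls outside the range $2r-1<\mathrm{Re}(s)\le 1$ in which poles are transferred below.

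The next step is to transfer poles across this identity. As $\Lambda_{4r}$ is a differential operator in $z$, it preserves meromorphy in $s$ and sends the principal part of $E_{4r}(z,s;\nu)$ at a pole $s_0$ to the principal part of $E_{4r-2}(z,s;\nu)$ at the same $s_0$, unless the residue lies in the kernel of $\Lambda_{4r}$. That kernel consists of the holomorphic weight-$4r$ forms, each of which is a $\Delta_{4r}$-eigenfunction of eigenvalue $0$; by the polynomial identity $(s-2r)(1-s-2r)$ in the proof of Proposition~\ref{corlocpoles} this forces $s_0\in\{2r,\,1-2r\}$. Neither value meets a pole in the half-plane we care about: $2r>1$ lies in the region of absolute convergence of~\eqref{eisensteindefi}, while $1-2r\le 2r-1$. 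Consequently every pole $s_0$ with $\mathrm{Re}(s_0)>2r-1$ has non-zero lowered residue and is therefore a pole of $E_{4r-2}(z,s;\nu)$; Proposition~\ref{corlocpoles} in weight $4r-2$ then bounds its real part, which closes the argument.

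The main obstacle is sharpening the bound to exactly $2r-1$. Transferring poles and quoting Proposition~\ref{corlocpoles} verbatim only gives holomorphy for $\mathrm{Re}(s)>\max\{2-2r,\,2r-1\}$, which already equals $2r-1$ once $r\ge\tfrac34$; for $\tfrac12<r<\tfrac34$ one must still exclude a residual pole of $E_{4r-2}(z,s;\nu)$ at the edge $s=2-2r$. Its residue would be a holomorphic form of weight $4r-2\in(0,1)$ with multiplier $\nu$, and the crux is that for the multiplier at hand no such form exists: the only holomorphic form of that small weight is a scalar multiple of $\theta^{8r-4}$, which carries the multiplier $\nu_{r-1/2}\neq\nu_r$. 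Making this non-existence precise — equivalently, showing that the residual spectrum feeding the poles of $E_{4r-2}(z,s;\nu)$ sits no further right than $2r-1$ — is the step demanding the most care; the rest of the argument is the formal transfer described above.
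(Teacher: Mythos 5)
Your argument is, step for step, the paper's own proof: the paper's one-line justification rests on the same lowering identity $2iy^2\partial_{\overline z}E_{4r}(z,s;\nu)=(s-2r)E_{4r-2}(z,s;\nu)$ (so your $c(s)$ is $\pm(s-2r)$, with its unique zero at $s=2r$ lying in the region of absolute convergence), the same Laurent-expansion analysis at a hypothetical pole, the same disposal of the kernel case (a holomorphic leading Laurent coefficient is a $\Delta_{4r}$-eigenfunction with eigenvalue $0$, forcing $s_0\in\{2r,1-2r\}$, both outside the half-plane in question), and finally Proposition~\ref{corlocpoles} applied in weight $4r-2$. In its executed part, your write-up is a correct and in fact more careful rendering of what the paper does.

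The obstacle you flag at the end is a genuine gap --- and it is equally a gap in the paper's proof, which supplies nothing beyond the transfer you describe. What the transfer gives is holomorphy for $\mathrm{Re}(s)>\max\{2-2r,\,2r-1\}$, which coincides with the claimed bound $2r-1$ only for $\frac34\le r<1$. For $\frac12<r<\frac34$ the corollary as stated is not established by this method; moreover, your proposed repair (non-existence of holomorphic forms of weight $4r-2$ with multiplier $\nu$) would only remove the edge point $s=2-2r$, where the residue has eigenvalue $0$. The positivity argument behind Proposition~\ref{corlocpoles} still tolerates real poles throughout $(\frac12,\,2-2r)$, whose residues would be complementary-series (non-holomorphic) square-integrable eigenfunctions, and it says nothing at all about the strip $2r-1<\mathrm{Re}(s)\le\frac12$, which the claimed bound also covers when $r<\frac34$; holomorphy there cannot follow from such soft square-integrability arguments (for the weight-$0$ series on $\SL$ there genuinely are poles with $0<\mathrm{Re}(s)<\frac12$, at half the nontrivial zeros of $\zeta$). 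So the honest conclusion of both your argument and the paper's is the bound $\max\{2-2r,\,2r-1\}$. Fortunately, that weaker bound is all the paper ever uses downstream: in Section~\ref{unifsec} the contour is shifted to $\mathrm{Re}(s)=\frac12+\varepsilon$, and one only needs every pole to the right of that line to have real part strictly less than $1$, which $\max\{2-2r,\,2r-1\}<1$ guarantees.
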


\begin{proof}
This follows from Proposition \ref{corlocpoles}, using $$2 i y^2 \partial_{\overline{z}} E_{4r}(z, s; \nu) = (s - 2r) E_{4r - 2}(z, s; \nu)$$ and looking at the Laurent-expansion around a pole of $E_{4r}(z, s; \nu)$.
\end{proof}

From the Fourier expansion of $E_{4r}(z, s; \nu)$ we see that if the Eisenstein series is holomorphic at some point $s$ with $\mathrm{Re}(s) > \frac{1}{2}$, then the Dirichlet series $Z_\nu(n, s)$ are holomorphic at $s$ as well. We rephrase Proposition \ref{corlocpoles} and Corollary \ref{corlocpoles2} in a separate corollary in the special case of the Dirichlet series $Z_{r}(n, s)$, which we will apply in the next section.

\begin{cor} \label{corthatweuse}
The functions $Z_r(n, s)$ are holomorphic for $\mathrm{Re}(s) > \max \{1 - 2r, 2r \}$ for $0 < r < \frac{1}{2}$ and for $\mathrm{Re}(s) > 2r - 1$ for $\frac{1}{2} < r < 1$.
\end{cor}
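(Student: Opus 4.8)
The plan is to deduce Corollary~\ref{corthatweuse} as a direct translation of the holomorphy statements already proved for the Eisenstein series into the language of the Dirichlet series $Z_r(n,s)$. The bridge between the two is the Fourier expansion in Proposition~\ref{eisfourier}, which expresses $Z_\nu(n,s)$ (up to an explicit unimodular constant $e^{2\pi i r}$ and a ratio of $\Gamma$-factors) as the $n$-th Fourier coefficient $\varphi_{4r,\nu}(n,s)$ of $E_{4r}(z,s;\nu)$. Concretely, for $n\neq 0$ one has
\begin{equation*}
    Z_\nu(n,s) = \frac{\Gamma(s+\sgn(n)2r)}{\pi^s |n|^{s-1} e^{2\pi i r}}\,\varphi_{4r,\nu}(n,s),
\end{equation*}
and for $n=0$ the analogous relation holds with $\varphi_{4r,\nu}(s)$ and the corresponding $\Gamma$-factor. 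Since $Z_r(n,s) = e^{-2\pi i r} Z_{\nu_r}(n,s)$ by definition, it suffices to control the right-hand side region by region.

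First I would invoke the remark (already in the excerpt, following Proposition~\ref{eisfourier}) that holomorphy of $E_{4r}(z,s;\nu)$ at a point $s$ with $\re(s)>\tfrac12$ forces holomorphy of each $Z_\nu(n,s)$ there: reading the Fourier expansion, any singularity of a Fourier coefficient would produce a singularity of the whole Eisenstein series, so the coefficients inherit holomorphy on any such region where $E_{4r}$ is holomorphic. Then for $0<r<\tfrac12$ I apply Proposition~\ref{corlocpoles}, which gives holomorphy of $E_{4r}(z,s;\nu)$ on $\re(s)>\max\{1-2r,2r\}$; since this half-plane lies inside $\re(s)>\tfrac12$ (both $1-2r$ and $2r$ exceed $\tfrac12$ when $r\neq\tfrac14$, and the bound is harmless at $r=\tfrac14$), the coefficients $\varphi_{4r,\nu_r}(n,s)$ are holomorphic there, and hence so is $Z_r(n,s)$. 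For $\tfrac12<r<1$ I instead apply Corollary~\ref{corlocpoles2}, giving holomorphy of $E_{4r}$ on $\re(s)>2r-1$, and translate again.

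The one point requiring care is the transfer of the $\Gamma$-factors: dividing by $\Gamma(s+\sgn(n)2r)^{-1}$ means multiplying $\varphi_{4r,\nu}(n,s)$ by $\Gamma(s+\sgn(n)2r)$, which is itself holomorphic and nonvanishing on the relevant right half-planes (its poles sit at $s=-\sgn(n)2r-k$ for $k\geq 0$, all with real part $\leq 2r<1$, hence to the left of the regions in question), so no spurious poles or cancellations are introduced; the factors $\pi^{-s}|n|^{1-s}e^{-4\pi i r}$ are entire and nonvanishing. Thus the holomorphy region passes cleanly from $\varphi_{4r,\nu_r}(n,s)$ to $Z_r(n,s)$. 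The main (and essentially only) obstacle is the caveat that the translation argument is valid only for $\re(s)>\tfrac12$, which is why the corollary is stated with the bounds $\max\{1-2r,2r\}$ and $2r-1$ rather than attempting to locate poles in the remaining part of the critical strip; fortunately both stated bounds exceed $\tfrac12$ in their respective ranges of $r$ (excluding the boundary case $r=\tfrac12$), so Proposition~\ref{corlocpoles} and Corollary~\ref{corlocpoles2} apply directly and the corollary follows without further work.
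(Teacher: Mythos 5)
Your route is the same as the paper's: transfer the holomorphy of $E_{4r}(z,s;\nu_r)$ given by Proposition~\ref{corlocpoles} (for $0<r<\tfrac12$) and Corollary~\ref{corlocpoles2} (for $\tfrac12<r<1$) to the Fourier coefficients via Proposition~\ref{eisfourier}, then to $Z_r(n,s)$ through the explicit $\Gamma$-factors. However, your final step rests on a false inequality and this creates a genuine gap. You claim that ``both stated bounds exceed $\tfrac12$ in their respective ranges of $r$''; this fails for $\tfrac12<r<\tfrac34$, where $2r-1<\tfrac12$. Since the translation argument you invoke (and which the paper states) is only justified for $\mathrm{Re}(s)>\tfrac12$ --- that is where the remark following Proposition~\ref{eisfourier} applies --- your proof as written says nothing about the strip $2r-1<\mathrm{Re}(s)\leq\tfrac12$, which is part of the region claimed by Corollary~\ref{corthatweuse} for those $r$. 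To be fair, the paper's own one-sentence justification has exactly the same limitation, and in the application in Section~\ref{unifsec} only holomorphy on $\mathrm{Re}(s)>\tfrac12+\varepsilon$ is ever used; but your write-up explicitly hinges on the incorrect numerical claim, so the gap should at least be acknowledged rather than asserted away.

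A second, smaller error concerns the $\Gamma$-factors. You state that the poles of $\Gamma(s+\sgn(n)2r)$ ``all have real part $\leq 2r<1$,'' but for $\tfrac12<r<1$ one has $2r>1$, and for $n<0$ the factor is $\Gamma(s-2r)$, whose rightmost pole $s=2r$ lies strictly inside the region $\mathrm{Re}(s)>2r-1$. So the $\Gamma$-factor is \emph{not} holomorphic on the relevant half-plane, and multiplying $\varphi_{4r,\nu_r}(n,s)$ by it could a priori introduce a pole of $Z_r(n,s)$ at $s=2r$. This is patchable: the Dirichlet series $Z_r(n,s)$ converges absolutely for $\mathrm{Re}(s)>1$ (trivially $\vert Z_r(n,s)\vert \leq \sum_{c\geq 1} c^{1-2\mathrm{Re}(s)}$), hence is holomorphic at $s=2r>1$, which forces the coefficient $\varphi_{4r,\nu_r}(n,s)$ to vanish there and cancel the $\Gamma$-pole. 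With that observation, together with an honest treatment of the strip left of $\mathrm{Re}(s)=\tfrac12$ (or a restatement of the corollary on $\mathrm{Re}(s)>\max\{2r-1,\tfrac12\}$, which is all the paper actually needs), your argument becomes correct.
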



The Eisenstein series $E_{4r}(z, s; \nu)$ as such is not square-integrable with respect to the Petersson inner product $$\langle f, g \rangle = \int_{\G_\theta \setminus \uh} f(z) \overline{g(z)} y^{4r} \frac{dxdy}{y^2},$$
the problematic part of $E_{4r}(z, s; \nu)$ being the zeroth Fourier coefficient in (\ref{eisfouriercusp}). To make it square-integrable, we consider the truncated Eisenstein series instead.

Let $Y > 0$. For each cusp $\mathfrak{a}$, let $F_\mathfrak{a}(Y) = \sigma_\mathfrak{a}.F_Y$ be the cuspidal zone around $\mathfrak{a}$, where $\sigma_\mathfrak{a} \in \mathrm{SL}_2(\R)$ is such that $\sigma_\mathfrak{a}.i \infty = \mathfrak{a}$ and $F_Y = \{ z \in \uh : - 1 < \mathrm{Re}(s) < 1, \mathrm{Im}(z) > \max \{1, Y \} \}$. We define the truncated Eisenstein series by \begin{equation*}
    E_{4r}^Y(z, s; \nu) = \begin{cases} E_{4r}(z, s; \nu) - \delta_{\mathfrak{a}, \infty} \mathrm{Im}(\sigma_{\mathfrak{a}}^{-1}.z)^{s - 2r} - \varphi_{4r, \mathfrak{a}, \nu} (s) \mathrm{Im}(\sigma_{\mathfrak{a}}^{-1}.z)^{1 - s - 2r}, &z \in F_\mathfrak{a}(Y), \\ E_{4r}(z, s; \nu), &\mathrm{otherwise}.  \end{cases}
\end{equation*}

The truncated Eisenstein series $E_{4r}^Y(z, s; \nu)$ is now a square-integrable function on $\G_\theta \setminus \uh$ of weight $4r$. Moreover, it satisfies the Maass-Selberg relations.

\begin{thm} \label{maassselberg}
Let $s_1$ and $s_2$ with $s_1 \neq s_2$ be regular points of $E_{4r}(z, s; \nu)$ and let $E_{4r}^Y(z, s; \nu)$ be the truncated Eisenstein series. We have \begin{align*}
    \langle E_{4r}^Y(z, s_1; \nu), E_{4r}^Y(z, s_2; \nu) \rangle &= (s_1 - s_2)^{-1} \varphi_{4r, \nu}(s_1) + (s_2 - s_1)^{-1} \varphi_{4r, \nu}(s_2) \\
    &+ (s_1 + s_2 - 1)^{-1} Y^{s_1 + s_2 - 1} - (s_1 + s_2 - 1)^{-1} \varphi_{4r, \nu}(s_1) \overline{\varphi_{4r, \nu}(s_2)}.
\end{align*}
\end{thm}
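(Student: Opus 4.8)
The statement is a Maass--Selberg relation, and I would prove it by the classical route: Green's second identity for the weight-$4r$ Laplacian $\Delta_{4r}$ on a truncated fundamental domain, with all boundary contributions collapsing onto the explicit constant terms at height $Y$. Write $E_i := E_{4r}(z,s_i;\nu)$ and $E_i^Y := E_{4r}^Y(z,s_i;\nu)$, and recall from Proposition \ref{sqintres} that each $E_i$ is a $\Delta_{4r}$-eigenfunction with eigenvalue $\lambda_i = (s_i - 2r)(1 - s_i - 2r)$. Since $s_1, s_2$ are regular points the $E_i$ are holomorphic there, the truncated series $E_i^Y$ are square-integrable (as noted before the statement), and $\Delta_{4r}$ is formally self-adjoint for the pairing $\langle f, g \rangle = \int_{\G_\theta \setminus \uh} f \overline{g}\, y^{4r}\, \frac{dx\,dy}{y^2}$. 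Beyond these facts the only input I need is the Fourier expansion (\ref{eisfouriercusp}).

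The key observation is that $E_i^Y$ is a genuine $\lambda_i$-eigenfunction of $\Delta_{4r}$ everywhere except across the truncation horocycles $\{\mathrm{Im}(\sigma_{\mathfrak a}^{-1}.z) = Y\}$: in the bulk $E_i^Y = E_i$, while inside each cuspidal zone $E_i^Y = E_i - c_0^{(\mathfrak a)}(\,\cdot\,, s_i)$ with constant term $c_0^{(\mathfrak a)}(y,s) = \delta_{\mathfrak a,\infty}\, y^{s-2r} + \varphi_{4r,\mathfrak a,\nu}(s)\, y^{1-s-2r}$, and the two monomials $y^{s-2r}, y^{1-s-2r}$ are precisely the two solutions of the radial equation $\Delta_{4r} f = \lambda f$ for $x$-independent $f$ (their exponents are the indicial roots, symmetric about $\tfrac12 - 2r$, as already reflected in Proposition \ref{corlocpoles}). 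Applying Green's identity to $E_1^Y$ and $\overline{E_2^Y}$, split along the horocycles, therefore produces $(\lambda_1 - \overline{\lambda_2}) \langle E_1^Y, E_2^Y \rangle$ on one side and, on the other, a sum over cusps of boundary integrals at $y = Y$; the vertical sides of the fundamental domain cancel in pairs by the automorphy of $E_{4r}(\cdot, s; \nu)$ under $\G_\theta$.

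On each horocycle I would integrate in $x$ over a period: by Fourier orthogonality every non-zero mode of $E_i = c_0^{(\mathfrak a)} + (\text{non-constant part})$ integrates to zero against the $x$-independent constant term, so after the usual cancellation only the Wronskian of the two constant terms survives, i.e. the boundary reduces to $\bigl[\, c_0^{(\mathfrak a)}(y, s_1)\, \partial_y \overline{c_0^{(\mathfrak a)}(y, s_2)} - \overline{c_0^{(\mathfrak a)}(y, s_2)}\, \partial_y c_0^{(\mathfrak a)}(y, s_1)\,\bigr]_{y = Y}$. Here the weight $y^{4r}$ in the measure cancels the $y^{-2r}$ twist carried by each $c_0^{(\mathfrak a)}$, reducing the computation to the weight-zero case. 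Substituting the explicit monomials and expanding yields four terms carrying in $Y$ the exponents $s_1 + s_2 - 1$, $s_1 - s_2$, $s_2 - s_1$, and $1 - s_1 - s_2$, with coefficients assembled from $1$, $\varphi_{4r,\nu}(s_1)$, $\overline{\varphi_{4r,\nu}(s_2)}$, and $\varphi_{4r,\nu}(s_1)\overline{\varphi_{4r,\nu}(s_2)}$.

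Finally I would divide by $\lambda_1 - \overline{\lambda_2}$. The clean factorization $\lambda_1 - \lambda_2 = (s_1 - s_2)(1 - s_1 - s_2) = -(s_1 - s_2)(s_1 + s_2 - 1)$ is exactly what produces the two denominators $s_1 - s_2$ and $s_1 + s_2 - 1$ of the statement, and pairing each boundary monomial with the matching factor of this product collapses the four terms into the four summands asserted. The main obstacle is precisely this last bookkeeping: pinning down the signs, the normalizing constant coming from the cusp width and the hyperbolic measure, and above all the conjugation pattern — which of $s_2, \overline{s_2}$ and of $\varphi_{4r,\nu}(s_2), \overline{\varphi_{4r,\nu}(s_2)}$ appears in each term, and which $Y$-powers are absorbed into the stated normalization — so as to land exactly on the right-hand side, together with verifying that among the several cusps of $\G_\theta$ only the $\mathfrak a = \infty$ constant term contributes as written. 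Everything else is the routine weight-$k$ transcription of the standard argument.
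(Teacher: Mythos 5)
Your proposal is correct and is essentially the paper's own proof spelled out in full: the paper disposes of Theorem \ref{maassselberg} by noting it is "an application of Green's theorem" and citing Iwaniec \cite[Prop. 6.8]{iwaniec}, which is exactly the argument you describe (Green's identity for $\Delta_{4r}$ on the truncated domain, Fourier orthogonality reducing the horocycle boundary terms to Wronskians of the constant terms, then division by $\lambda_1 - \overline{\lambda_2} = -(s_1-s_2)(s_1+s_2-1)$ up to conjugation conventions). The bookkeeping issues you flag at the end --- conjugation pattern, $Y$-powers, and which cusps contribute --- are real but are precisely the details delegated to the cited reference, so your route and the paper's coincide.
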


\begin{proof}
This is an application of Green's theorem; see, e.g., Iwaniec's book \cite[Prop. 6.8]{iwaniec}.
\end{proof}

The Maass-Selberg relations in the case $\nu = \nu_r$ give a growth estimate for $Z_r(n, \sigma + it)$ with $\frac{1}{2} < \sigma < \frac{3}{2}$ as $t \to \pm \infty$, which we will use for giving a growth estimate for the Weyl sums (\ref{weylsums}). 

\begin{lem} \label{growthlem}
Let $\frac{1}{2} < \sigma < \frac{3}{2}$ and $n \in \Z$. We have \begin{equation*}
    Z_r(n, \sigma + it) \ll_{\sigma, \; r, \; n} \sqrt{\vert t \vert}
\end{equation*}
as $\vert t \vert \to + \infty$. 
\end{lem}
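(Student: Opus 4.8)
The plan is to read $Z_r(n,s)$ off the $n$-th Fourier coefficient of $E_{4r}(z,s;\nu_r)$ and to control that coefficient through the $L^2$-norm of the truncated Eisenstein series, which the Maass--Selberg relations bound. Writing $s=\sigma+it$, Proposition \ref{eisfourier} gives $Z_r(n,s)=\frac{\Gamma(s+\sgn(n)2r)}{\pi^s|n|^{s-1}e^{2\pi ir}}\,\varphi_{4r,\nu_r}(n,s)$ for $n\neq 0$, and the analogous identity through $\varphi_{4r,\nu_r}(s)$ for $n=0$. By Stirling the gamma factor on the line $\mathrm{Re}(s)=\sigma$ contributes $|\Gamma(\sigma+\sgn(n)2r+it)|\asymp|t|^{\sigma+\sgn(n)2r-1/2}e^{-\pi|t|/2}$, so the target bound $Z_r(n,s)\ll\sqrt{|t|}$ becomes equivalent to $\varphi_{4r,\nu_r}(n,s)\ll_{\sigma,r,n}|t|^{1-\sigma-\sgn(n)2r}e^{\pi|t|/2}$. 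For $n=0$ the same computation collapses the three gamma factors in Proposition \ref{eisfourier} to a factor $\asymp|t|^{-1/2}$, so it suffices there to show that the scattering coefficient $\varphi_{4r,\nu_r}(s)$ stays bounded.

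First I would dispose of the constant term. Applying Theorem \ref{maassselberg} with $s_1=s$ and $s_2=\bar s$, and using the conjugation symmetry of the Eisenstein series (here one must be careful with the twist by the multiplier system, which replaces $\nu_r$ by its conjugate), the left-hand side becomes the non-negative quantity $\|E_{4r}^Y(\cdot,s)\|^2$, and for any fixed $Y>1$ the relation reads
\[
\|E_{4r}^Y(\cdot,s)\|^2=\frac{\varphi_{4r,\nu_r}(s)-\varphi_{4r,\nu_r}(\bar s)}{2it}+\frac{Y^{2\sigma-1}}{2\sigma-1}-\frac{\varphi_{4r,\nu_r}(s)\,\overline{\varphi_{4r,\nu_r}(\bar s)}}{2\sigma-1}.
\]
Non-negativity of the left side, together with the fact that the cross term is essentially $|\varphi_{4r,\nu_r}(s)|^2$, forces $|\varphi_{4r,\nu_r}(s)|\ll_{\sigma,Y}1$ as $|t|\to+\infty$; this settles the case $n=0$ and, fed back into the identity, also gives the uniform bound $\|E_{4r}^Y(\cdot,s)\|^2\ll_{\sigma,Y}1$.

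For $n\neq 0$ I would extract the coefficient by Parseval. Restricting the norm to the cuspidal zone $F_\infty(Y)$, where the truncated series is exactly $\sum_{m\neq0}y^{-2r}\varphi_{4r,\nu_r}(m,s)W_{\sgn(m)2r,s-1/2}(2\pi|m|y)e^{\pi imx}$, orthogonality in $x$ yields $\|E_{4r}^Y(\cdot,s)\|^2\geq c\,|\varphi_{4r,\nu_r}(n,s)|^2\int_Y^\infty|W_{\sgn(n)2r,s-1/2}(2\pi|n|y)|^2\,\frac{dy}{y^2}$, so the problem reduces to an asymptotic \emph{lower} bound, as $|t|\to+\infty$, for this Whittaker $L^2$-integral. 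This last step is where I expect the main difficulty to lie. With $\mu=s-1/2\approx it$ the function $W_{\sgn(n)2r,\mu}$ is in its oscillatory régime for $2\pi|n|y\lesssim 2|t|$, and its amplitude there carries an exponential factor $e^{-\pi|t|/2}$ together with a power and a logarithm of $|t|$; the delicate point is that this rate, and in particular its dependence on the sign of the index $\kappa=\sgn(n)2r$, must match the factor $e^{-\pi|t|/2}|t|^{\sigma+\sgn(n)2r-1/2}$ from the gamma function so that the two exponentials cancel and the surviving powers of $|t|$ collapse to exactly $|t|^{1/2}$. I would obtain these asymptotics from the uniform large-$\mu$ expansion of $W_{\kappa,\mu}$ (or, for $\kappa=0$, from the corresponding statements for $K_{it}$), and then verify that the $\sgn(n)$-asymmetry of the gamma factor is exactly compensated by the $\kappa$-dependence of the Whittaker norm; apart from Stirling's formula, this Whittaker estimate is the only genuinely analytic input, and $Y$ may be kept fixed throughout.
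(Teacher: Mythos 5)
Your proposal follows the paper's proof essentially step for step: Maass--Selberg with $s_1=s$, $s_2=\bar s$ to show the scattering coefficient $\varphi_{4r,\nu_r}(s)$ (and hence $\|E_{4r}^Y(\cdot,s;\nu_r)\|^2$) stays bounded, settling $n=0$ via Stirling, and then Parseval on the cuspidal zone at $i\infty$ to lower-bound that norm by $|Z_r(n,s)|^2$ times a Whittaker $L^2$-integral. The Whittaker estimate you flag as the main remaining difficulty is exactly the one ingredient the paper quotes, namely $W_{\sgn(n)2r,\,s-1/2}(4\pi|n|y)\sim\pi^{-1/2}\Gamma(\sgn(n)2r+s-\tfrac12)(\pi|n|y)^{1-s}$ uniformly for $y$ in compact sets (hence the paper integrates only over $[Y,Y+1]$), and the cancellation of the $e^{-\pi|t|/2}$ factors against the gamma factor that you anticipate is precisely what makes the powers collapse to $\sqrt{|t|}$.
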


\begin{proof}
Write $s = \sigma + it$ with $\sigma \in \R$ and $t > 0$. From Theorem \ref{maassselberg} for $\nu = \nu_r$ with $s_1 = s$ and $s_2 = \overline{s}$, we get \begin{align*}
    &\int_{\G_\theta \setminus \uh} \vert E_{4r}^Y(z, s; \nu_r) \vert^2 d \mu(z) \\
    &= \frac{1}{2 \sigma - 1} \left( Y^{2 \sigma - 1} + \vert \varphi_{4r, \nu_r}(s) \vert^2 Y^{1 - 2 \sigma} \right) + \frac{1}{it} \left(\overline{\varphi_{4r, \nu_r}(s)} Y^{it} + \varphi_{4r, \nu_r}(s) Y^{- it} \right),
\end{align*} from which it follows immediately that $\varphi_{4r, \nu_r}(s)$ is bounded as $\vert t \vert \to + \infty$. By Stirling's approximation, we have $$\frac{\Gamma(2s - 1)}{\Gamma(s + 2r) \Gamma(s - 2r)} \sim \frac{\vert t \vert^{- 1 / 2}}{\sqrt{2 \pi}}, \; \mathrm{as} \; t \to \pm \infty,$$
hence the claim for $n = 0$ follows by Proposition \ref{eisfourier}. 

To prove the assertion for $Z_r(n, s)$ with $n \neq 0$, we again use the Maass-Selberg relations to estimate for $Y > 1$: \begin{align*}
    &\int_{\G_\theta \setminus \uh} \vert E_{4r}^Y(z, s; \nu_r) \vert^2 y^{4r} \frac{dxdy}{y^2} \\
    &\geq \int_{Y}^\infty \int_{-1}^1 \vert E_{4r}^Y(z, s; \nu_r) \vert^2 y^{4r} \frac{dxdy}{y^2} \\
    &= \sum_{m \neq 0} \left\vert \frac{\pi^s \vert m \vert^{s - 1}}{\Gamma(s + \sgn(m) 2r)} \right\vert^2 \vert Z_r(m, s) \vert^2 \int_Y^\infty \vert W_{\sgn(m) 2r, s - \frac{1}{2}}(4 \pi \vert m \vert y) \vert^2 \frac{dy}{y^2} \\
    &\geq \left\vert \frac{\pi^s \vert n \vert^{s - 1}}{\Gamma(s + \sgn(n) 2r)} \right\vert^2 \vert Z_r(n, s) \vert^2 \int_Y^{Y + 1} \vert W_{\sgn(n) 2r, s - \frac{1}{2}}(4 \pi \vert n \vert y) \vert^2 \frac{dy}{y^2}.
\end{align*}
Using the asymptotic formula $$W_{\sgn(n) 2r, s - \frac{1}{2}}(4 \pi \vert n \vert y) \sim \pi^{- 1 / 2} \Gamma(\sgn(n) 2r + s - 1 / 2) (\pi \vert n \vert y)^{1 - s}$$ as $\vert t \vert \to + \infty$ (which converges uniformly for $y$ in a compact set), we see that \begin{equation} \label{eqinlem}
    \int_{\G_\theta \setminus \uh} \vert E_{4r}^Y(z, s; \nu_r) \vert^2 y^{4r} \frac{dxdy}{y^2} \gg \left\vert \frac{\Gamma( s + \sgn(n) 2r - 1 / 2)}{\Gamma(s + \sgn(n) 2r)} \right\vert^2 \vert Z_r(n, s) \vert^2 \int_Y^{Y + 1} \frac{1}{y^{2 \sigma}} dy.
\end{equation}
The left hand side of (\ref{eqinlem}) is bounded by what we showed in the first part of the claim. Again, by Stirling's approximation, the claim follows for $n \neq 0$. 
\end{proof}

\section{Uniform Distribution of Hardy Sums} \label{unifsec}

In this section, we will use the results from section \ref{eisstsec} to prove Theorem \ref{mainthm}.

Let $0 < r < 1$ be a rational number and $n \in \Z$. Since the case $r = \frac{1}{2}$ was already proved in section \ref{casem2}, we may assume that $r \neq \frac{1}{2}$.

To work out the explicit growth rate of the Weyl sums, we will use Perron's formula, i.e. that for any $\alpha > 1$, we have \begin{equation}
    \sum_{c = 1}^N \sum_{\substack{d(c)^*, \\ c + d \; \mathrm{odd}}} e(n d / c + r S(d, c)) = \frac{1}{2 \pi i} \int_{\alpha - i \infty}^{\alpha + i \infty} Z_r(n, s) \frac{N^{2s}}{2s}ds.
\end{equation}

Let $\varepsilon > 0$, $T > 0$, $\alpha = 1 + \varepsilon$, and $s_1, ..., s_l \in \C$ be the location of the poles of $Z_r(n, s)$. Using Cauchy's theorem, we obtain \begin{align*}
    \frac{1}{2 \pi i} \int_{\alpha - i T}^{\alpha + i T} Z_r(n, s) \frac{N^{2s}}{2s}ds &= \int_{\frac{1}{2} + \varepsilon - i T}^{\frac{1}{2} + \varepsilon + i T} Z_r(n, s) \frac{N^{2s}}{2s} ds + \int_{\frac{1}{2} + \varepsilon + i T}^{\alpha + i T} Z_r(n, s) \frac{N^{2s}}{2s} ds \\
    &- \int_{\frac{1}{2} + \varepsilon - i T}^{\alpha - i T} Z_r(n, s) \frac{N^{2s}}{2s} ds + \sum_{j = 1}^n \mathrm{Res}_{s = s_j} Z_r(n, s) \frac{N^{2s_j}}{2s_j}.
\end{align*}

The integrals along the horizontal lines satisfy \begin{align*}
    \left\vert \int_{\alpha - i T}^{\frac{1}{2} + \varepsilon - iT} Z_r(n, s) \frac{N^{2s}}{2s}ds \right\vert \leq \int_{\alpha - i T}^{\frac{1}{2} + \varepsilon - iT} \left\vert Z_r(n, s) \frac{N^{2s}}{2s}\right\vert ds \leq \frac{N^{2\alpha}}{T} \int_{\alpha - i T}^{\frac{1}{2} + \varepsilon - iT} \left\vert Z_r(n, s) \right\vert ds \ll \frac{N^{2\alpha}}{T^\frac{1}{2}}
\end{align*}
as $T \to + \infty$ by Lemma \ref{growthlem}. The integral along the vertical line $\frac{1}{2} + \varepsilon + i \R$ can, again by Lemma \ref{growthlem}, be bounded by \begin{equation}
    \left\vert \int_{\frac{1}{2} + \varepsilon - i T}^{\frac{1}{2} + \varepsilon + i T} Z_r(n, s) \frac{N^{2s}}{2s} ds \right\vert \ll_{r, \; \varepsilon, \; n} N^{1 + \varepsilon} T^\frac{1}{2}.
\end{equation}

Hence, we get \begin{equation*}
    \sum_{c = 1}^N \sum_{\substack{d(c)^*, \\ c + d \; \mathrm{odd}}} e(n d / c + r S(d, c)) = \sum_{j = 1}^n \mathrm{Res}_{s = s_j} Z_r(n, s) \frac{N^{2s_j}}{2s_j} + O \left(N^{2 + \varepsilon} / T^{1 / 2} + N^{1 + \varepsilon} T^{1 / 2} \right)
\end{equation*}
as $T \to + \infty$. Theorem \ref{mainthm} then follows by Corollary \ref{corthatweuse} and choosing $T = N$.

\newpage

\end{document}